\newcommand{\euscr}[1]{\EuScript{#1}} 
\newcommand{\Ext}{\textnormal{Ext}} 
\DeclareMathOperator*{\colim}{colim}
\newcommand{\Mod}{\euscr{M}od} 
\newcommand{\pushout}{\arrow[ul, phantom, "\ulcorner", very near start]}
\def\MU{\mathrm{MU}}
\theoremstyle{plain}
\newtheorem{theorem}{Theorem}
\newtheorem{lemma}[theorem]{Lemma}
\newtheorem{corollary}[theorem]{Corollary}
\newtheorem*{theorem*}{Theorem}
\theoremstyle{definition}
\newtheorem{definition}[theorem]{Definition}
\newtheorem{remark}[theorem]{Remark}
\newtheorem*{remark*}{Remark}
\newtheorem*{interpretation*}{Interpretation}
\newtheorem*{definition*}{Definition}
\newtheorem*{conjecture*}{Conjecture}
\newtheorem*{notation*}{Notation}
\newtheorem*{convention*}{Convention}
\theoremstyle{remark}
\numberwithin{equation}{section}
  \def\subsection{\@startsection{subsection}{1}%
  \z@{.7\linespacing\@plus\linespacing}{.5\linespacing}%
  {\normalfont\bfseries\centering}}
\let\oldtocsection=\tocsection
\let\oldtocsubsection=\tocsubsection
\let\oldtocsubsubsection=\tocsubsubsection
\renewcommand{\tocsection}[2]{\hspace{0em}\oldtocsection{#1}{#2}}
\renewcommand{\tocsubsection}[2]{\hspace{1em}\oldtocsubsection{#1}{#2}}
\renewcommand{\tocsubsubsection}[2]{\hspace{2em}\oldtocsubsubsection{#1}{#2}}
\begin{document}
\title[]{Adams-type maps are not stable under composition}

\author{Robert Burklund}
\address{Department of Mathematics, MIT, Cambridge, MA, USA}
\email{burklund@mit.edu}

\author{Ishan Levy}
\address{Department of Mathematics, MIT, Cambridge, MA, USA}
\email{ishanl@mit.edu}

\author{Piotr Pstr\k{a}gowski}
\address{Department of Mathematics, Harvard, Cambridge, MA, USA}
\email{pstragowski.piotr@gmail.com}

\begin{abstract}
We give a simple counterexample to the plausible conjecture that Adams-type maps of ring spectra are stable under composition. We then show that over a field, this failure is quite extreme, as any map of $\mathbb{E}_{\infty}$-algebras is a transfinite composition of Adams-type maps. 
\end{abstract}

\maketitle 



The Adams spectral sequence is a fundamental tool in stable homotopy theory which, given a map $A \to B$ of ring spectra, lets us compute the homotopy groups of $A$ in terms of information living over $B$. 
Unfortunately, the $E_{2}$-page of the Adams spectral sequence can be difficult to identify in general. 
The standard additional assumption which makes the $E_2$-page computable in terms of homological algebra is that the map $A \to B$ is flat in one of the senses below.

\begin{definition} 
We say a map $A \rightarrow B$ of $\mathbb{E}_{1}$-rings is 
\begin{enumerate}
    \item \emph{Adams-type} if we can write $B \simeq \varinjlim B_{\alpha}$ as a filtered colimit of perfect $A$-modules with the property that $\pi_{*}(B \otimes_{A} B_{\alpha})$ is projective as a $\pi_*B$-module, 
    \item \emph{descent-flat}\footnote{In older literature, what we call descent-flat is often simply referred to as \emph{flat}. We avoid the latter term as it is also often used to refer to maps such that $\pi_*A \rightarrow \pi_*B$ is flat, a much stronger condition than the one we work with.} if $\pi_{*}(B \otimes_{A} B)$ is flat as a left $\pi_*B$-module. 
\end{enumerate}
\end{definition}

If $A \rightarrow B$ is descent-flat, then for any pair $M$, $N$ of $A$-modules such that $\pi_{*}(B \otimes_{A} N)$ is projective $\pi_*B$-module, the associated Adams spectral sequence computing homotopy classes of $A$-module maps from $M$ to $N$ has signature
\[
\Ext^{s, t}_{(\pi_*B,\, \pi_*(B \otimes_A B))} \left(\pi_{*}(B \otimes_{A} N),\, \pi_{*}(B \otimes_{A} M)\right) \Longrightarrow [N,\, M]_{A}.
\]
If $A \rightarrow B$ is furthermore Adams-type, then one can construct an Adams spectral sequence with the above signature with no projectivity assumption on $\pi_{*}(B \otimes_{A} N)$, as in the work of Devinatz \cite[\S 1]{dev_morava}.

As both of the above notions are a form of flatness, it is natural to expect that they are stable under composition, and the authors of this note have spent considerable amount of time trying to show that this is indeed the case. To our surprise, this is very far from being true; our first result is the following simple counterexample. 

\begin{theorem}
\label{theorem:main_theorem}
Both of the maps $\mathbb{S} \rightarrow \MU$ and $\MU \rightarrow \mathbb{Z}$ are Adams-type, but the composite $\mathbb{S} \rightarrow \mathbb{Z}$ is not even descent-flat. In particular, neither Adams-type nor descent-flat maps of $\mathbb{E}_{\infty}$-ring spectra are stable under composition.
\end{theorem}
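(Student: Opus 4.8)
The plan is to verify the three assertions in turn and then formalize the non-stability. The key structural observation is that an Adams-type map is automatically descent-flat: if $B \simeq \varinjlim B_\alpha$ with each $\pi_*(B \otimes_A B_\alpha)$ projective over $\pi_* B$, then since homotopy commutes with filtered colimits, $\pi_*(B \otimes_A B) \cong \varinjlim \pi_*(B \otimes_A B_\alpha)$ is a filtered colimit of projective, hence flat, modules, and is therefore flat. Consequently it suffices to produce two Adams-type maps whose composite is not descent-flat: this immediately shows descent-flat maps are not closed under composition, and --- since the composite is then not descent-flat, hence not Adams-type --- that Adams-type maps are not closed under composition either. Throughout I write $H\mathbb{Z}$ for the Eilenberg--MacLane spectrum denoted $\mathbb{Z}$ in the statement.

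That $\mathbb{S} \to \MU$ is Adams-type is the classical input underlying the $\MU$-based Adams spectral sequence. I would take the $B_\alpha$ to be the finite skeleta of $\MU$, which are perfect $\mathbb{S}$-modules with $\MU \simeq \varinjlim_\alpha B_\alpha$. Because $\MU$ has torsion-free homology concentrated in even degrees and $\MU_* \MU = \pi_*(\MU \otimes_{\mathbb{S}} \MU)$ is a free $\MU_*$-module, each $\pi_*(\MU \otimes_{\mathbb{S}} B_\alpha) = \MU_* B_\alpha$ is free, in particular projective, over $\pi_* \MU = \MU_*$.

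The more technical step is to show $\MU \to H\mathbb{Z}$ is Adams-type. Here I would exploit that $H\mathbb{Z} \simeq \MU/(x_1, x_2, \dots)$ is the quotient of $\MU_* = \mathbb{Z}[x_1, x_2, \dots]$ by the regular sequence of its polynomial generators, and present it as the filtered colimit $H\mathbb{Z} \simeq \varinjlim_n \MU/(x_1, \dots, x_n)$ of perfect $\MU$-modules, each built as an iterated cofiber. Since $H\mathbb{Z}$ is an $\MU$-algebra, each $x_i$ (being of positive degree) acts on $H\mathbb{Z}$ through its image $0 \in \pi_* H\mathbb{Z}$, so the relevant cofiber sequences split and $\pi_*(H\mathbb{Z} \otimes_{\MU} \MU/(x_1, \dots, x_n)) \cong \Lambda_{\mathbb{Z}}(\sigma x_1, \dots, \sigma x_n)$, an exterior algebra that is free, hence projective, over $\pi_* H\mathbb{Z} = \mathbb{Z}$. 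I expect the bookkeeping of this regular-sequence/Koszul description --- identifying the finite stages and checking the splittings --- to be the main obstacle.

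It remains to see that the composite $\mathbb{S} \to H\mathbb{Z}$ is not descent-flat, which is the conceptual heart of the counterexample. Here $\pi_*(H\mathbb{Z} \otimes_{\mathbb{S}} H\mathbb{Z}) = H_*(H\mathbb{Z}; \mathbb{Z})$, and since $\pi_* H\mathbb{Z} = \mathbb{Z}$, flatness is equivalent to torsion-freeness; so I only need to exhibit torsion. Working at the prime $2$, Milnor's computation gives $H_*(H\mathbb{Z}; \mathbb{F}_2) \cong \mathbb{F}_2[\xi_1^2, \xi_2, \xi_3, \dots]$, which vanishes in degree $1$ and is one-dimensional in degree $2$. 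As $H_*(H\mathbb{Z}; \mathbb{Q})$ is concentrated in degree $0$, every positive-degree integral homology group is torsion, and the universal coefficient theorem then forces $H_2(H\mathbb{Z}; \mathbb{Z})$ to contain a nonzero $2$-torsion element (otherwise $H_2(H\mathbb{Z}; \mathbb{F}_2)$ would vanish). Thus $\pi_*(H\mathbb{Z} \otimes_{\mathbb{S}} H\mathbb{Z})$ is not flat over $\mathbb{Z}$, so $\mathbb{S} \to H\mathbb{Z}$ is neither descent-flat nor, a fortiori, Adams-type, completing the argument.
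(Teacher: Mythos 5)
Your proposal is correct and takes essentially the same approach as the paper: the crucial step, showing $\MU \rightarrow \mathbb{Z}$ is Adams-type, uses the identical presentation of $\mathbb{Z}$ as the filtered colimit of the perfect $\MU$-modules $\MU/(x_1,\ldots,x_n)$, with each base change splitting as $\mathbb{Z} \otimes_{\MU} \MU/x_i \simeq \mathbb{Z} \oplus \Sigma^{|x_i|+1}\mathbb{Z}$ because $x_i$ acts by zero on $\mathbb{Z}$. The only differences are presentational: you make explicit the (correct, standard) implication that Adams-type maps are descent-flat, and you derive the torsion in $\pi_*(\mathbb{Z} \otimes_{\mathbb{S}} \mathbb{Z})$ from Milnor's computation of $H_*(\mathbb{Z};\mathbb{F}_2)$ via universal coefficients, where the paper simply cites the literature for both facts.
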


\begin{proof}
It is well-known that $\mathbb{S} \rightarrow \MU$ is Adams-type \cite[13.4(iv)]{adams1995stable}. Similarly, $\mathbb{Z}_{*} \mathbb{Z}$ is known to contain torsion elements in positive degrees and so is not flat over $\mathbb{Z}_{*}$ \cite{cartan1955seminaire}
, hence $\mathbb{S} \rightarrow \mathbb{Z}$ is not descent-flat. 

To see that $\MU \to \mathbb{Z}$ is Adams-type, note that if we write $\MU_{*} \simeq \mathbb{Z}[b_{1}, b_{2}, \ldots]$ for some choice of generators $b_{i}$, then 
\[
\mathbb{Z} = \colim_{n} \ \bigotimes^{\MU}_{1 \leq i \leq n}\MU/b_i
\]
in $\MU$-modules. We claim this is the needed expression of $\mathbb{Z}$ as a filtered colimit. To see this, note that we have an equivalence 
\[
\mathbb{Z} \otimes_{\MU} \MU/b_{i} \simeq \mathbb{Z}/b_{i} \simeq \mathbb{Z} \oplus \Sigma^{|b_{i}|+1} \mathbb{Z}
\]
as $b_{i}$ vanishes in $\mathbb{Z}$ and hence for each $n$ we have 
\[
\mathbb{Z} \otimes_{\MU} \bigotimes^{\MU}_{1 \leq i \leq n}\MU/b_i \simeq \bigotimes^{\mathbb{Z}}_{1 \leq i \leq n} \mathbb{Z} \oplus \Sigma^{|b_{i}|+1} \mathbb{Z}
\]
which is a perfect $\mathbb{Z}$-module whose homotopy groups form a finitely generated free abelian group, as needed. 
\end{proof}




The counterexample of \cref{theorem:main_theorem} was contrary to our expectations, and led us to ask which maps of ring spectra can be written as compositions (perhaps infinite) of Adams-type maps. In \cref{theorem:a_map_of_commutative_k_algebras_a_tf_composite_of_adams_type_maps} below we show that, surprisingly, at least for $\mathbb{E}_{\infty}$-algebras over a field, \emph{any} map of ring spectra has this property.


\begin{lemma}\label{lem:adamstypestability}
Let $R \to R'$ be a map of $\mathbb{E}_{2}$-algebras. If $A$ is an $\mathbb{E}_{1}$-$R$-algebra such that the unit $R \rightarrow A$ is of Adams-type, then so is $R' \rightarrow R' \otimes_{R} A$. 
\end{lemma}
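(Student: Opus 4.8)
The plan is to base change the Adams-type presentation of $A$ over $R$ along the map $R \to R'$. Writing $R \to A$ as Adams-type means $A \simeq \colim_{\alpha} A_{\alpha}$ is a filtered colimit of perfect $R$-modules with each $\pi_{*}(A \otimes_{R} A_{\alpha})$ projective over $\pi_{*}A$. I set $B := R' \otimes_{R} A$ and propose the presentation $B_{\alpha} := R' \otimes_{R} A_{\alpha}$, aiming to show that this exhibits $R' \to B$ as Adams-type.

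First I would record the formal properties of base change $F := R' \otimes_{R} (-) \colon \mathrm{Mod}_{R} \to \mathrm{Mod}_{R'}$. Because $R$ and $R'$ are $\mathbb{E}_{2}$ and $R \to R'$ is a map of $\mathbb{E}_{2}$-algebras, the categories $\mathrm{Mod}_{R}$ and $\mathrm{Mod}_{R'}$ are monoidal and $F$ is a monoidal left adjoint (this is the functoriality of module categories in the base $\mathbb{E}_{2}$-ring). Consequently $B = F(A)$ is canonically an $\mathbb{E}_{1}$-algebra whose unit $R' = F(R) \to F(A) = B$ is the map in the statement, and there is a natural equivalence $F(A \otimes_{R} N) \simeq B \otimes_{R'} F(N)$. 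Since $F$ preserves filtered colimits we get $B \simeq \colim_{\alpha} B_{\alpha}$, and since it preserves perfect modules each $B_{\alpha}$ is a perfect $R'$-module; these dispose of the two easy requirements.

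The central point is the projectivity of $\pi_{*}(B \otimes_{R'} B_{\alpha})$ over $\pi_{*}B$. Using monoidality of $F$ (equivalently, the projection formula $B \otimes_{R'} (R' \otimes_{R} A_{\alpha}) \simeq B \otimes_{R} A_{\alpha}$ together with $B \otimes_{R} A_{\alpha} \simeq B \otimes_{A} (A \otimes_{R} A_{\alpha})$) I would produce an equivalence
\[
B \otimes_{R'} B_{\alpha} \simeq B \otimes_{A} (A \otimes_{R} A_{\alpha})
\]
of left $B$-modules, where $A \otimes_{R} A_{\alpha}$ carries its left $A$-module structure. By hypothesis $\pi_{*}(A \otimes_{R} A_{\alpha})$ is projective, hence flat, over $\pi_{*}A$, so the relative Künneth spectral sequence $\mathrm{Tor}^{\pi_{*}A}_{s,t}(\pi_{*}B, \pi_{*}(A \otimes_{R} A_{\alpha})) \Rightarrow \pi_{s+t}(B \otimes_{A} (A \otimes_{R} A_{\alpha}))$ collapses onto its edge, giving $\pi_{*}(B \otimes_{R'} B_{\alpha}) \cong \pi_{*}B \otimes_{\pi_{*}A} \pi_{*}(A \otimes_{R} A_{\alpha})$. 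Extension of scalars along $\pi_{*}A \to \pi_{*}B$ preserves projectivity, so the result is projective over $\pi_{*}B$, which completes the verification.

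I expect the main obstacle to be the bookkeeping of the left and right module structures: one must check that the displayed equivalence is one of \emph{left} $B$-modules inducing precisely the $\pi_{*}B$-action that appears in the Adams-type condition, which is exactly what the monoidality of $F$ guarantees. Once this compatibility is pinned down, the collapse of the spectral sequence and the stability of projectivity under base change are routine.
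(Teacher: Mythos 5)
Your proof is correct, and its overall strategy---base changing the given presentation $A \simeq \colim_\alpha A_\alpha$ to $B_\alpha := R' \otimes_R A_\alpha$ and identifying $B \otimes_{R'} B_\alpha \simeq R' \otimes_R (A \otimes_R A_\alpha)$ via the projection formula---is exactly the paper's. Where you diverge is in how projectivity is then verified. The paper first reformulates the condition entirely at the module level: $\pi_*(C \otimes_B M)$ is projective over $\pi_* C$ if and only if $C \otimes_B M$ is a retract of a sum of suspensions of copies of $C$ (it calls such $M$ ``$C$-projective''). With this characterization, preservation under base change is automatic, since the monoidal functor $R' \otimes_R (-)$ carries retracts of sums of shifts of $A$ to retracts of sums of shifts of $R' \otimes_R A$; no computation of homotopy groups is needed. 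You instead keep projectivity at the level of $\pi_*$ and invoke the collapsing K\"unneth spectral sequence
\[
\mathrm{Tor}^{\pi_*A}_{s,t}\bigl(\pi_*B,\, \pi_*(A \otimes_R A_\alpha)\bigr) \Longrightarrow \pi_{s+t}\bigl(B \otimes_A (A \otimes_R A_\alpha)\bigr),
\]
which is valid (projective implies flat, so the higher Tor terms vanish), followed by the observation that extension of scalars along $\pi_*A \to \pi_*B$ preserves projectivity. This works, and it even yields a slightly stronger conclusion---an explicit identification $\pi_*(B \otimes_{R'} B_\alpha) \cong \pi_*B \otimes_{\pi_*A} \pi_*(A \otimes_R A_\alpha)$---but it is what forces the bimodule bookkeeping you flag as the main obstacle: you must track that the spectral sequence is one of left $\pi_*B$-modules and that the equivalence respects those structures. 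The paper's retract characterization is precisely the device that makes all of that bookkeeping evaporate, which is why its proof is three lines long.
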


\begin{proof}
    We start with an observation about the Adams-type condition:
    If $B \to C$ is a map of $\mathbb{E}_1$-algebras, and $M$ is a $B$-module, then 
    $\pi_*(C \otimes_B M)$ is projective as a $\pi_*C$-module iff the $C$-module $C \otimes_B M$ is a retract of a sum suspensions of copies of $C$. We call a $B$-module which satisfies this condition $C$-projective.
    
    Now we prove the lemma.
    Write $A$ as a filtered colimit $A \simeq \colim A_{\alpha}$ of $A$-projective $R$-modules.
    Then $R' \otimes_{A} R \simeq \colim R' \otimes_{R} A_{\alpha}$ is a filtered colimit with the required property, as 
    \[ (R' \otimes_R A) \otimes_{R'} (R' \otimes_R A_\alpha) \simeq R' \otimes_R (A \otimes_R A_\alpha) \]
    which implies that $R' \otimes_R A_\alpha$ is $(R' \otimes_R A)$-projective.
\end{proof}

\begin{corollary} \label{cor:po-at}
Adams-type maps of $\mathbb{E}_{\infty}$-ring spectra are stable under pushouts. 
\end{corollary}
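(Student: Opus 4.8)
The plan is to deduce this immediately from \cref{lem:adamstypestability}. First I would recall that in the $\infty$-category of $\mathbb{E}_{\infty}$-ring spectra the pushout of a span $A \leftarrow R \rightarrow R'$ is computed by the relative tensor product $R' \otimes_{R} A$. Consequently, to say that Adams-type maps are stable under pushouts is to say the following: given an Adams-type map $R \to A$ and an arbitrary map $R \to R'$ of $\mathbb{E}_{\infty}$-rings, the cobase change $R' \rightarrow R' \otimes_{R} A$ is again Adams-type.

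Next I would observe that this is precisely the conclusion of \cref{lem:adamstypestability}. Indeed, any $\mathbb{E}_{\infty}$-ring is in particular an $\mathbb{E}_{2}$-ring, and any map of $\mathbb{E}_{\infty}$-rings is in particular a map of $\mathbb{E}_{2}$- (and hence $\mathbb{E}_{1}$-) algebras, so the hypotheses of the lemma hold verbatim: $R \to R'$ is a map of $\mathbb{E}_{2}$-algebras, $A$ is an $\mathbb{E}_{1}$-$R$-algebra, and its unit $R \to A$ is of Adams-type. Applying the lemma then yields that $R' \to R' \otimes_{R} A$ is of Adams-type, which is exactly what is required.

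The step requiring the most care is bookkeeping rather than mathematics: one should confirm that the forgetful functor from $\mathbb{E}_{\infty}$-rings to $\mathbb{E}_{2}$-rings carries the $\mathbb{E}_{\infty}$-pushout $R' \otimes_{R} A$ to the relative tensor product appearing in the lemma, so that the Adams-type witness produced by \cref{lem:adamstypestability} genuinely describes the $\mathbb{E}_{\infty}$-theoretic cobase change. Since the Adams-type condition refers only to the underlying $\mathbb{E}_{1}$-ring together with its module category, and since relative tensor products are preserved by the relevant forgetful functors, this compatibility is automatic. I therefore expect no substantive obstacle beyond correctly invoking the lemma; the content of the corollary lies entirely in the recognition that cobase change of the unit $R \to A$ along $R \to R'$ is the map $R' \to R' \otimes_{R} A$ already treated in \cref{lem:adamstypestability}.
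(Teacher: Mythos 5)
Your proposal is correct and is exactly the argument the paper intends: the corollary is stated without proof precisely because it is the specialization of \cref{lem:adamstypestability} to $\mathbb{E}_{\infty}$-rings, using that the pushout $R' \amalg_{R} A$ of $\mathbb{E}_{\infty}$-rings is computed by the relative tensor product $R' \otimes_{R} A$. Your additional remark that the Adams-type condition only depends on the underlying $\mathbb{E}_{1}$-ring, and that the forgetful functors preserve the relevant relative tensor products, is the right point to check and is handled correctly.
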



\begin{lemma}
\label{lemma:adams_type_if_source_or_target_is_a_field}
Let $A \rightarrow B$ be a map of $\mathbb{E}_{1}$-rings such that either $\pi_{*}A$ or $\pi_{*}B$ is a graded skew-field (that is, every non-zero homogeneous element is invertible). Then $A \rightarrow B$ is Adams-type. 
\end{lemma}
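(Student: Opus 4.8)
The plan is to split into the two cases and, in both, to reduce to the reformulation of the Adams-type condition recorded in the proof of \cref{lem:adamstypestability}: a perfect $A$-module $B_{\alpha}$ contributes to an Adams-type presentation of $B$ precisely when $B \otimes_{A} B_{\alpha}$ is a retract of a sum of suspensions of copies of $B$, equivalently when $\pi_{*}(B \otimes_{A} B_{\alpha})$ is projective over $\pi_{*}B$. In both cases I would produce an explicit filtered colimit $B \simeq \colim_{\alpha} B_{\alpha}$ of perfect $A$-modules satisfying this condition.

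First I would dispose of the case where $\pi_{*}B$ is a graded skew-field. Here every graded module over $\pi_{*}B$ is free, hence projective, so the condition that $\pi_{*}(B \otimes_{A} B_{\alpha})$ be projective over $\pi_{*}B$ holds automatically for \emph{any} $A$-module $B_{\alpha}$. Since $\Mod_{A}$ is compactly generated with compact objects precisely the perfect modules, we may always write $B \simeq \colim_{\alpha} B_{\alpha}$ as a filtered colimit of perfect $A$-modules, and any such presentation witnesses that $A \rightarrow B$ is Adams-type.

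For the case where $\pi_{*}A$ is a graded skew-field, the key input is that every $A$-module $M$ is free, i.e.\ equivalent to a sum of suspensions $\bigoplus_{i} \Sigma^{n_{i}} A$. To see this I would choose a homogeneous basis $\{x_{i}\}$ of the free graded $\pi_{*}A$-module $\pi_{*}M$, assemble the corresponding maps $\Sigma^{n_{i}} A \rightarrow M$ into a single map $\bigoplus_{i} \Sigma^{n_{i}} A \rightarrow M$, and observe that it is an equivalence since it is an isomorphism on homotopy. Applying this to $B$, write $B \simeq \bigoplus_{i \in I} \Sigma^{n_{i}} A$ and present it as the filtered colimit, over finite subsets $F \subseteq I$, of the perfect modules $B_{F} := \bigoplus_{i \in F} \Sigma^{n_{i}} A$. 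Then $B \otimes_{A} B_{F} \simeq \bigoplus_{i \in F} \Sigma^{n_{i}} B$ is a finite sum of suspensions of $B$, so each $B_{F}$ is $B$-projective, and this presentation exhibits $A \rightarrow B$ as Adams-type.

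I do not expect any serious obstacle here; the only points requiring care are the two standard facts being invoked, namely that graded modules over a graded skew-field are free and that the perfect $A$-modules are exactly the compact objects of $\Mod_{A}$, so that an arbitrary $A$-module can be written as a filtered colimit of perfect ones. The mild asymmetry between the two cases is worth flagging: when $\pi_{*}B$ is the skew-field the projectivity is free of charge and any perfect presentation works, whereas when $\pi_{*}A$ is the skew-field one must exploit freeness of $B$ itself to arrange the colimands to be $B$-projective.
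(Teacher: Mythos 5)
Your proof is correct and follows essentially the same route as the paper: both cases reduce to the fact that graded modules over a graded skew-field are free, combined with writing $B$ as a filtered colimit of perfect $A$-modules via compact generation of $\Mod(A)$. The only cosmetic difference is that in the case where $\pi_{*}A$ is the skew-field you build an explicit presentation of $B$ by finite sub-sums of a decomposition $B \simeq \bigoplus_{i} \Sigma^{n_{i}} A$, whereas the paper observes that \emph{any} filtered-colimit presentation by perfect modules works, since perfect $A$-modules are themselves finite sums of shifts of $A$.
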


\begin{proof}
Suppose that the first condition holds. Then, every $A$-module is a direct sum of shifts of $A$, and hence for any way to write $B \simeq \varinjlim B_{\alpha}$ as a filtered colimit of perfect $A$-modules, $B \otimes_{A} B_{\alpha}$ is a finite direct sum of shifts of $B$ and hence $\pi_{*}(B \otimes_{A} B_{\alpha})$ is a free $B_{*}$-module. 

Similarly, if $\pi_{*} B$ is a graded skew-field, then the perfect $B$-modules $B \otimes_{A} B_{\alpha}$ are also finite sums of shifts of $B$, as needed.
\end{proof}

\begin{theorem}
\label{theorem:a_map_of_commutative_k_algebras_a_tf_composite_of_adams_type_maps}
Let $k$ be a field and $A \rightarrow B$ a map of $\mathbb{E}_{\infty}$-$k$-algebras. Then any $A \rightarrow B$ can be factored as transfinite composite of Adams-type maps.
\end{theorem}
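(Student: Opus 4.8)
The plan is to reduce the statement to the two structural results already established: that a map of $\mathbb{E}_{\infty}$-rings whose source or target has homotopy a graded skew-field is Adams-type (\cref{lemma:adams_type_if_source_or_target_is_a_field}), and that Adams-type maps are stable under pushouts (\cref{cor:po-at}). The key point is that working over a field $k$ makes the former applicable to the generating cofibrations of $\mathbb{E}_{\infty}$-$k$-algebras, since $\pi_{*}k = k$ is concentrated in degree $0$ and is therefore a graded skew-field.

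First I would present $B$ as a cell $A$-algebra. The $\infty$-category $\mathrm{CAlg}_{k}$ of $\mathbb{E}_{\infty}$-$k$-algebras is presentable and generated under colimits by free algebras, so by the small object argument (cellular approximation) the map $A \to B$ factors as a transfinite composite $A = B_{0} \to B_{1} \to \cdots$ whose colimit is equivalent to $B$, in which each successor map $B_{\lambda} \to B_{\lambda+1}$ is a pushout of a generating cofibration. Concretely, these arise by applying the free-algebra functor to the generating cofibrations $\Sigma^{m-1}k \to D^{m}$ of $\mathrm{Mod}_{k}$; since $D^{m} \simeq 0$, they take the form $F_{\lambda} \to k$ with $F_{\lambda} = \mathrm{Free}_{\mathbb{E}_{\infty},k}(\bigoplus_{i} \Sigma^{m_{i}}k)$ a free $\mathbb{E}_{\infty}$-$k$-algebra.

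Next I would observe that each base map $F_{\lambda} \to k$ is Adams-type: its target $k$ has $\pi_{*}k = k$ a graded skew-field, so \cref{lemma:adams_type_if_source_or_target_is_a_field} applies directly, and this holds no matter how many generators $F_{\lambda}$ carries. Applying \cref{cor:po-at}, every successor map $B_{\lambda} \to B_{\lambda+1}$, being a pushout of an Adams-type map, is itself Adams-type. Finally, since equivalences are trivially Adams-type (a constant filtered colimit exhibits them as such), any residual equivalence from the colimit of the tower to $B$ may be appended as a final term, exhibiting $A \to B$ as a transfinite composite of Adams-type maps.

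I expect the main obstacle to be the bookkeeping of the cellular presentation rather than anything about Adams-type maps. One must make precise, in the $\infty$-categorical setting, that $A \to B$ is genuinely a transfinite composite of pushouts of the maps $F_{\lambda} \to k$ (including the treatment of limit stages and the trailing equivalence), and verify that the pushouts in question are computed in $\mathbb{E}_{\infty}$-rings so that \cref{cor:po-at} applies verbatim. Once this presentation is in hand, the homotopical input is concentrated entirely in the single observation that over a field the generating cofibrations have target with graded-field homotopy, which is exactly the hypothesis of \cref{lemma:adams_type_if_source_or_target_is_a_field}.
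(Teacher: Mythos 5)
Your overall strategy is the same as the paper's: run the small object argument on a set of generating maps built from free $k$-algebras, observe via \cref{lemma:adams_type_if_source_or_target_is_a_field} that these generators are Adams-type because one side is a field, and propagate along cell attachments via \cref{cor:po-at}. However, there is a genuine gap at exactly the step where the paper does its real work. You assert that the small object argument produces a transfinite composite of cell attachments ``whose colimit is equivalent to $B$,'' and later treat the residual map as ``any residual equivalence.'' The $\infty$-categorical small object argument does not give you this: it gives a factorization $A \to B' \to B$ in which $B' \to B$ has the right lifting property against your generating set $\{\mathrm{Free}(\Sigma^{m-1}k) \to k\}$, and one must then \emph{prove} that this lifting property forces $B' \to B$ to be an equivalence. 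Your write-up presupposes precisely this.

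The paper closes that gap by putting two families into the generating set, the inclusions $i_n \colon k \to k\{x_n\}$ and the projections $p_n \colon k\{x_n\} \to k$ (both Adams-type, since in each case one side is a field): lifting against $i_n$ gives surjectivity of $\pi_n B' \to \pi_n B$, and lifting against $p_n$ gives injectivity. Your smaller generating set, consisting only of the maps $\mathrm{Free}(\Sigma^{m-1}k) \to k$ (which are the paper's $p_{m-1}$), does in fact suffice, but the surjectivity half is then more delicate: injectivity on $\pi_{m-1}$ is immediate, while surjectivity on $\pi_m$ comes from lifting squares whose top arrow is the \emph{zero} map $\mathrm{Free}(\Sigma^{m-1}k) \to B'$ and whose filling homotopy is a nontrivial self-homotopy of the zero map into $B$, i.e.\ a class in $\pi_m B$; a lift is then a class in $\pi_m B'$ hitting it. (This is the $\infty$-categorical shadow of the model-categorical fact that lifting against $S^{m-1} \to D^m$ encodes both injectivity and surjectivity, because $D^m$ is contractible rather than zero; alternatively you could ground the whole construction in an honest model structure on commutative $Hk$-algebras, where ``RLP against generating cofibrations $\Rightarrow$ trivial fibration'' holds by construction, but then you owe a comparison of model-categorical and $\infty$-categorical pushouts.) Either supply this two-dimensional lifting analysis, or do as the paper does and add the maps $i_n$ to the generating set --- they cost nothing, since \cref{lemma:adams_type_if_source_or_target_is_a_field} covers them as well. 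With that one step made explicit, your proof is correct and is essentially the paper's.
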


\begin{proof}
Let $k \{ x_{n} \}$ denote the free $\mathbb{E}_{\infty}$-$k$-algebra on a variable of degree $| x_{n} | = n$.  Let $\mathcal{G}$ denote the set of maps $i_{n}: k \rightarrow k \{ x_{n} \}$ together with the maps $p_{n}: k \{ x_{n} \} \rightarrow k$ determined by $x_{n} \mapsto 0$. As a consequence of \cref{lemma:adams_type_if_source_or_target_is_a_field}, all of the $i_{n}$ and $p_{n}$ are Adams-type, as either the source or target is a field. By the small object argument, the given map $A \rightarrow B$ can be factored as 
\[
A \rightarrow B^{\prime} \rightarrow B
\]
where $B^{\prime} \rightarrow B$ has the right lifting property with respect to the maps in $\mathcal{G}$, and $A \rightarrow B^{\prime}$ is a transfinite composition of maps $A_{\alpha} \to A_{\alpha+1}$ which fit into pushout squares 
\begin{center}
\begin{tikzcd}
	S \ar[r,"f"]\ar[d]& \ar[d]T \\
	A_{\alpha}\ar[r] & A_{\alpha+1}\pushout
\end{tikzcd}
\end{center}
where $f \in \mathcal{G}$. Each such map is Adams-type by \cref{cor:po-at}, and it follows that $A \rightarrow B^{\prime}$ is a transfinite composite of such maps. On the other hand, one may observe  that the right lifting property with respect to $i_n$ implies surjectivity on $\pi_n(-)$ and the right lifting property with respect to $p_n$ implies injectivity on $\pi_n(-)$, therefore  $B^{\prime} \rightarrow B$ is an equivalence.
    \qedhere
\end{proof}

\begin{remark}
By using a more careful argument, where we also allow pushouts along maps $k\{S\} \to k$ and $k \to k\{S\}$ for $S$ a (graded) set of generators, one can show that in the context of \cref{theorem:a_map_of_commutative_k_algebras_a_tf_composite_of_adams_type_maps} the given map factors as an $\omega$-indexed composite of Adams-type maps. 
\end{remark}

We believe the above two results show that the somewhat unexplored theory of Adams-type ring spectra still has a few surprises up its sleeve. To further emphasize this point, we share with the reader a few natural questions which we believe are open.

\begin{enumerate}
    \item Does there exist a descent-flat map $A \rightarrow B$ which is not Adams-type\footnote{If we had to guess, we expect that a descent-flat map of ring spectra which is not Adams-type does indeed exist, although we couldn't find one. A curious variant of this question (which we also do not know the answer to) would be to ask if exists a descent-flat map $A \rightarrow B$ such that the associated homology theory $\pi_{*}(B \otimes_{A} -): \Mod(A) \rightarrow \euscr{C}\mathrm{omod}_{\pi_{*}(B \otimes_{A} B)}$ is adapted in the sense of \cite[Definition 2.19]{patchkoria2021adams}, which is not Adams-type? In other words, does possessing a modified Adams spectral sequence based on $\pi_{*}(B \otimes_{A} B)$-comodules characterize Adams-type maps?}?
    \item Is every map of $\mathbb{E}_{1}$-algebras (over the sphere) a transfinite composite of Adams-type maps? Is it true for every map of $\mathbb{E}_{\infty}$-algebras? 
    \item Let $A \rightarrow B$ be an Adams-type map and $M$ be an $A$-module such that $\pi_{*}(B \otimes_{A} M)$ is a flat $B_{*}$-module. Can we write $M \simeq \varinjlim M_{\alpha}$ as a filtered colimit of perfect $A$-modules such that $\pi_{*}(B \otimes_{A} M_{\alpha})$ is a projective $B_{*}$-module?
\end{enumerate}

\bibliographystyle{amsalpha}
\bibliography{adamstype}

\end{document}